\documentclass[11pt,a4paper]{article}
\usepackage[utf8]{inputenc}
\usepackage[T1]{fontenc}
\usepackage[english]{babel}
\usepackage[width=16.00cm, height=23.00cm]{geometry}

\usepackage{amsthm}
\usepackage{lipsum}
\usepackage{amsfonts}
\usepackage{graphicx}
\usepackage{epstopdf}
\usepackage{algorithmic}
\usepackage{mathtools}
\usepackage{amsmath}
\usepackage{amssymb}
\usepackage{amsfonts}
\usepackage{mathrsfs}
\usepackage{bbm}
\usepackage{xcolor}
\usepackage{hyperref}

\usepackage{cleveref}

\newtheorem{theorem}{Theorem}
\newtheorem{remark}{Remark}
\newtheorem{lemma}{Lemma}

\newcommand{\shape}{\Omega}

\let\svthefootnote\thefootnote
\newcommand\freefootnote[1]{%
	\let\thefootnote\relax%
	\footnotetext{#1}%
	\let\thefootnote\svthefootnote%
}

\title{Gâteaux semiderivative approach applied to shape optimization for contact problems}

\author{
	Nico Goldammer\footnote{Helmut-Schmidt-University / University of the Federal Armed Forces Hamburg, Holstenhofweg~85, 22043 Hamburg, 
		(\texttt{goldammer@hsu-hh.de}) }, 
	Volker H.~Schulz\footnote{Trier University, Universitätsring 15, 54296 Trier, Germany\newline
		(\texttt{volker.schulz@uni-trier.de}) }, 
	Kathrin Welker\footnote{TU Bergakademie Freiberg, Akademiestraße 6, 09599 Freiberg, Germany\newline
		(\texttt{Kathrin.Welker@math.tu-freiberg.de}) } 
}

\date{}

\begin{document}
	\maketitle
	
	% REQUIRED

	\begin{abstract}
		Shape optimization problems constrained by variational inequalities (VI) are non-smooth and non-convex optimization problems. 
		The non-smoothness arises due to the variational inequality constraint, which makes it challenging to derive optimality conditions. 
		Besides the non-smoothness there are complementary aspects due to the VIs as well as distributed, non-linear, non-convex and infinite-dimensional aspects due to the shapes which complicate to set up an optimality system and, thus, to develop efficient solution algorithms. 
		In this paper, we consider Gâteaux semiderivatives in order to formulate
		optimality conditions. 
		In the application, we concentrate on a shape optimization problem constrained by the contact problem.
	\end{abstract}
	
	% REQUIRED
	\paragraph{Keywords}
		contact problem, directional derivative, variational inequality, shape optimization, material derivative, shape derivative, optimality conditions, Gâteaux semiderivatives
	
	% REQUIRED
	\paragraph{MSCcodes}
		49Q10, 49J40, 35Q93, 65K15 
	
	\freefootnote{\textbf{Founding:} This work has been partly supported by the German Research Foundation (DFG) within the priority program SPP1962/2 under contract numbers WE~6629/1-1 and SCHU~804/19-1.}

%	\newpage
	\section{Introduction}
%	\KW{@alle: Intro habe ich ein wenig bearbeitet. 
%		Glaube die folgenden Teile der Intro koennen wir behalten. 
%		ES FEHLT: Bezug zum Kunisch Paper und erwaehnen, dass wir den Ansatz in diesem Paper betrachten. 
%		Die entsprechend moeglichen zwei guten Stellen dafuer habe ich unten kenntlich gemacht. 
%		@Volker: Da du das Paper begutachtet hast, kennst du dich hier ggf am besten mit dem Paper aus. 
%		Würdest du dazu einen kleinen Abschnitt irgendwo schreiben?}
	
	Optimal control problems with constraints in the form of variational inequalities (VI) are challenging, since classical constraint qualifications for deriving Lagrange multipliers generally fail. 
	Therefore, not only the development of stable numerical solution schemes but also the development of suitable first order optimality conditions is an issue.
	By usage of tools of modern analysis, such as monotone operators in Banach spaces, significant results on properties of the solution operator of variational inequalities have been achieved since the 1960s (cf.~\cite{Br-1971,BrSt-1968,LiSt-1967}). 
	Comprehensive studies of variational inequalities and more references can be found in \cite{Glowinski-1984,KO-1988,KS-1980,Panagiotopoulos-1985}.
	The generic non-smoothness and non-convexity in the feasible set described by variational inequalities causes difficulties already in finite dimensional versions of the problem. 
	In fact, finite dimensional bilevel optimization (i.e., optimization with optimization problems in the constraints) is its own field of research since the 1970s (cf., e.g., \cite{Br-Gill-1973}) and has been generalized to mathematical programming with equilibrium constraints (MPECs) for the optimization of stationary systems of constrained problems in \cite{Ha-Pa-1988}. 
	For a survey on bilevel programming and MPECs see, e.g., \cite{LPR-1996}. 
	In \cite{SS-2000}, the authors concentrate on the typical complementarity structure of variational inequalities and derive a hierarchy of stationarity concepts (depending on constraint qualification conditions) for the more general problem class of mathematical programs with complementarity constraints (MPCCs). 
	During the last decade, these concepts have partly been transferred to respective concepts in function space in \cite{HMW-2012,HMW-2013,HK-2009}.
	The optimal control of variational inequalities that are posed in function space has been studied since the 1970s and necessary stationarity conditions have been derived by use of penalty and smoothing techniques and strengthened by the usage of instruments from convex analysis and differentiability, see, e.g., \cite{Barbu-1984,MP-1984,NST-2006}. 
	The conditions that a solution can be shown to verify have a complex structure and the problem to find candidates for solutions leads to a system of non-linear and non-smooth equations. 
	This demands for the development of numerical algorithms and a proper mathematical analysis on their convergence behavior, see, e.g., the discussion in \cite{KK-2002,OKZ-1998}. 
	
	In this paper, we consider shape optimization problems constrained by variational inequalities. 
	These problems are non-smooth and non-convex optimization problems. The non-smoothness arises due to the variational inequality constraint, which makes it challenging to derive optimality conditions. 
	Moreover, besides the non-smoothness there are complementarity aspects due to the VIs as well as distributed, non-linear, non-convex and infinite dimensional aspects due to the shapes which complicate to set up an optimality system.	
	In particular, one cannot expect for an arbitrary shape functional depending on solutions to VIs the existence of the shape derivative or to obtain the shape derivative as a linear mapping. 
	In addition, the adjoint state can generally not be introduced and, thus, an optimality system cannot be set up.
	A common way to handle the non-smoothness is to regularize the given problem, e.g., by replacing the $\max$-function by a smoothed approximation or by replacing certain functions with regularized version, and than consider the obtained regularized problem (see for example \cite{ChristofMeyerWaltherClason2018, SchielaWachsmuth2013, Mordukhovich2006, HintermullerKopacka2011}).
	{A mollification is also a tool that can be used to tweak non-smoothness as it is done in \cite{KovtunenkoKunisch2023}.
	They also discuss directional derivatives for shape optimization problems with VI. }
	{This paper aims at establishing a way to treat VI constrained shape optimization problems without the use of regularizations. 
We also aim at avoiding regularization techniques.  Therefore, we  use Gâteaux semiderivatives in order to obtain a useful kind of derivative such that we can deal with the unregularized contact problem.
}
%	\\\KW{Hier waere glaube ich eine gute Stelle, um das Paper von Kunisch einzubringen und unsere Ziele/Beitrag noch einmal kenntlich zu machen.\\}
%	\gn{Die Farge ist, wie bringen wir KovtunenkoKunisch2023 ein? Auch in dem Paper wird ja regularisiert. Zumindest habe ich das so verstanden. Dort wird die min-Funktion mittel einer Approximation der 1 geglättet.\\}

	%	The proposed \red{Hadamard} semiderivative approach enables the analytical and computational treatment of shape optimization problems constrained by VIs which are non-shape differentiable in the classical sense such that these can handled and solved without any regularization techniques leading often only to approximated shape solutions. Thanks to the new approach, we obtain for example expressions for shape derivatives in the context of unregularized VIs.
	
	So far, there are only very few approaches in the literature to the problem class of VI constrained shape optimization problems. 
	In \cite{KO-1994}, shape optimization of 2D elasto-plastic bodies is studied, where the shape is simplified to a graph such that one dimension can be written as a function of the other. 
	In \cite[Chap.~4]{SokoZol}, shape derivatives of elliptic variational inequality problems are presented in the form of solutions to again variational inequalities. 
	In \cite{Myslinski-2001}, shape optimization for 2D graph-like domains are investigated. 
	Also \cite{LR-1991a,LR-1991b} present existence results for shape optimization problems which can be reformulated as optimal control problems, whereas \cite{DM-1998,G-2001} show existence of solutions in a more general set-up. 
	In \cite{Myslinski-2004,Myslinski-2007}, level-set methods are proposed and applied to graph-like two-dimensional problems. 
	Moreover, \cite{HL-2011} presents a regularization approach to the computation of shape and topological derivatives in the context of elliptic variational inequalities and, thus, circumventing the numerical problems in \cite[Chap.~4]{SokoZol}. 
	In \cite{Sturm-VI-2016}, the analysis of state material derivatives is significantly refined over \cite[Chap.~4]{SokoZol}.  
	All these mentioned problems have in common that one cannot expect for an arbitrary shape functional depending on solutions to VIs to obtain the shape derivative as a linear mapping (cf.~\cite[Example in Chap.~1]{SokoZol}). 
	In general, the shape derivative for VI-constrained problems fails to be linear with respect to the normal component of the vector field defined on the boundary of the open domain under consideration. 
	In order to circumvent the problems related to the non-linearity of the shape derivative and in particular the non-existence of the shape derivative of a VI constrained shape optimization problem, this paper concentrates on presenting a Gâteaux approach using Gâteaux semiderivatives. % in order to obtain the semiderivative of the Lagrangian as well as a Gâteaux adjoint for the unregularized contract problem.}
	%This could also open the door for formulating higher order optimization methods in shape spaces.
	%	With the help of a \red{Hadamard} semiderivative approach, VI constrained shape optimization problems which are not shape differentiable in the classical sense can be handled and solved without any regularization techniques. 

	This paper is structured as follows. 
	In \cref{sec:Model}, a VI constrained shape optimization problem is introduced and reformulated.
	Then, we derive the optimality system in the approach usually employed for Fréchet differentiable problems for VI constrained shape optimization in \cref{sec:OptSystem}, in order to prepare the reader to  \cref{sec:Application} in which we apply our Gâteaux approach to tackle a contact problem. 
	Within this context, we consider the Lagrangian associated with the contact problem and we use the concept of Gâteaux semiderivatives in order to calculate its derivative.
	This process enables us to generalize various objects and introduce a Gâteaux adjoint to the system under consideration.
	A conclusion is given in \cref{sec:conclusion}.

	%%%%%%%%%%%%%%%%%%%%%%%%%%%%%%%%%%%%%%%%%%%%%%%%%%%%%%%%%%%%%%%%%%%%%%%%%%%%%%%%%%%%%%%%%%%%%%%%%%%%%%%%%%%%%%%%%%%%%%%%%%%%%%%%%%%%%%%%%%%%%%%%%%%%%%%%%%%%%%%%%%%%%%%%%%%%%%%%%%%%%%%%%%%%%%%%%%%%%%%%%%%%%%%%%%%%%%%%%%%%%%%%%%%%%%%%%%%%%%%%%%%%%%%%%%%%%%%%%%%%%%%%%%%%%%%%%%%%%%%%%%%%%%%%%%%%%%%%%%%%%%%%%%%%%%%%%%%%%%%%%%%%%%%%%%%%%%%%%%%%%%%%%%%%%%%%%%%%%%%%%%%%%%%%%%%%%%%%%%%%%%%%%%%%%%%%%%%%%%%%%%%%%%%%%%%%%%%%%%%%%%%%%%%%%%%%%%%%%%%%%%%%%%%%%%%%%%%%%%%%%%%%%%%%%%%%%%%%%%%%%%%%%%%%%%%%%%%%%%%%%%%%%%%%%%%%%%%%%%%%%%%%%%%%%%%%%%%%%%%%%%%%%%%%%%%%%%%%%%%%%%%%%%%%%%%%%%%%%%%%%%%%%%%%%%%%%%%%%%%%%%%%%%%%%%%%%%%%%%%%%%%%%%%%%%%%%%%%%%%%%%%%%%%%%%%%%%%%%%%%%%%%%%%%%%%%%%%%%%%%%%%%%%%%%%%%%%%%%%%%%%%%%%%%%%%%%%%%%%%%%%%%%%%%%%%%%%%%%%%%%%%%%%%%%%%%%%%%%%%%%%%%%%%%%%%%%%%%%%%%%%%%%%%%%%%%%%%%%%%%%%%%%%%%%%%%%%%%%%%%%%%%%%%%%%%%%%%%%%%%%%%%%%%%%%%%%%%%%%%%%%%%%%%%%%%%%%%%%%%%%%%%%%%%%%%%%%%%%%%%%%%%%%%%%%%%%%%%%%%%%%%%%%%%%%%%%%%%%%%%%%%%%%%%%%%%%%%%%%%%%%%%%%%%%%%%%%%%%%%%%%%%%%%%%%%%%%%%%%%%%%%%%%%%%%%%%%%%%%%%%%%%%%%%%%%%%%%%%%%%%%%%%%%%%%%%%%%%%%%%%%%%%%%%%%%%%%%%%%%%%%%%%%%%%%%%%%%%%%%%%%%%%%%%%%%%%%%%%%%%%%%%%%%%%%%%%%%%%%%%%%%%%%%%%%%%%%%%%%%%%%%%%%%%%%%%%%%%%%%%%%%%%%%%%%%%%%%%%%%%%%%%%%%%%%%%%%%%%%%%%%%%%%%%%%%%%%%%%%%%%%%%%%%%%%%%%%%%%%%%
	
	\section{Problem formulation}
	\label{sec:Model}
	
	A main focus in shape optimization is in the investigation of shape functionals.
	A shape functional on an arbitrary shape space\footnote{Various shapes spaces have been extensively studied in recent decades. 
	In \cite{Kendall}, a shape space is just modelled as a linear (vector) space, which in the simplest case is made up of vectors of landmark positions.
	However, there is a large number of different shape concepts, e.g., plane smooth curves \cite{MichorMumford}, piecewise-smooth curves \cite{PryymakSuchanWelker2023}, surfaces in higher dimensions \cite{BauerHarmsMichor,MichorMumford2}, boundary contours of objects \cite{LingJacobs,RumpfWirth2}, multiphase objects \cite{WirthRumpf}, characteristic functions of measurable sets \cite{Zolesio},  morphologies of images \cite{DroskeRumpf},  and planar triangular meshes \cite{HerzogLoayzaRomero:2020:1}. The choice of the shape space depends on the demands in a given situation. There exists no common shape space  suitable for all applications.} 
	$\mathcal{U}$ is given by a function 
	$J\colon \mathcal{U} \to \mathbb{R}\text{, } \shape\mapsto J(\shape).$
	In general, a  shape optimization problem can be formulated by
	\begin{equation}
	\label{minproblem}
	\min_{\shape\in \mathcal{U}} J(\shape).
	\end{equation}
	Often, shape optimization problems are constrained by equations, e.g., equations involving an unknown function of two or more variables and at least one partial derivative of this function. The objective may depend on not only the shape $\Omega$ but also the  state variable $y$, where the state variable is the solution of the underlying constraint. 
	
	We consider a tracking-type shape optimization problem constrained by a variational inequality of the first kind, a so-called obstacle-type problem. Applications are manifold and arise, whenever a shape is to be constructed in a way not to violate constraints for the state solutions of partial differential equation depending on a geometry to be optimized. Just think of a heat equation depending on a shape, where the temperature is not allowed to surpass a certain threshold. This example is basically the model problem already considered in \cite{Luft2020} and that we are formulating in the following. In contrast to \cite{Luft2020}, which formulates an optimization approach based on the convergence of state, adjoint and shape derivative of the regularized problem to  limit objects, we do not consider regularized versions of the VI. 
	We consider a Gâteaux semiderivative approach in order to formulate an optimality system. We will see that this system is in line with the limit objects of \cite{Luft2020}.
	
%\gn{Denn das ist ja nun nicht mehr der Fall oder?} 
%\textcolor{cyan}{Was ist nicht mehr der Fall? Der Satz oben stimmt doch so...}

	\paragraph{Model formulation.}
	
%	Let $\mathcal{X}\subset \mathbb{R}^n$ be an open bounded domain equipped with a sufficiently smooth boundary $\partial\mathcal{X}$. This domain is assumed to be partitioned in an open subdomain $\mathcal{X}_\text{out}\subset\mathcal{X}$ and an open interior domain $\mathcal{X}_\text{int}\subset\mathcal{X}$ with boundary $\Gamma:=\partial\mathcal{X}_\text{int}$ such that $\mathcal{X}_\text{out}\sqcup \Gamma \sqcup \mathcal{X}_\text{int}=\mathcal{X}$, where $\sqcup$ denotes the disjoint union. The closure of $\mathcal{X}$ is denoted by $\bar{\mathcal{X}}$. We consider $\mathcal{X}$ depending on $\Gamma$, i.e., $\mathcal{X}=\mathcal{X}(\Gamma)$.
%	In the following, the boundary $\Gamma$ of the interior domain is called the interface. In the setting above, the shape $\shape$ is represented by the interior domain $\mathcal{X}_\text{int}$. 
%	In contrast to the outer boundary $\partial\mathcal{X}$, which is assumed to be fixed, the inner boundary is variable. 
%	If $\Gamma(=\partial \shape)$ changes, then the subdomains $\shape,\mathcal{X}_\text{out}\subset \mathcal{X}$ change in a natural manner.
	{
		Let $\mathcal{X}\subset \mathbb{R}^n$ be an open bounded domain equipped with a sufficiently smooth boundary $\partial\mathcal{X}$. 
		This domain is assumed to be partitioned in an open subdomain $\mathcal{X}_\text{out}\subset\mathcal{X}$ and an open interior domain $\shape \subset\mathcal{X}$ with boundary $\Gamma:=\partial \shape$ such that $\mathcal{X}_\text{out}\sqcup \Gamma \sqcup \shape = \mathcal{X}$, where $\sqcup$ denotes the disjoint union. 
		The closure of $\mathcal{X}$ is denoted by $\bar{\mathcal{X}}$. 
		In the following, the boundary $\Gamma$ of the interior domain $\shape$ is called the interface.
		In the setting above, $\shape$ denotes the shape.
		In contrast to the outer boundary $\partial\mathcal{X}$, which is assumed to be fixed, the inner boundary is variable.
		If $\Gamma$ changes, then the subdomains $\shape,\mathcal{X}_\text{out}\subset \mathcal{X}$ change in a natural manner. Thus, one can consider $\mathcal{X}$ depending on $\Gamma$, i.e., $\mathcal{X} = \mathcal{X}(\Gamma)$. % or $\shape$ (i.e. $\mathcal{X}=\mathcal{X}(\shape) $), respectively.
	}
	
	Let $\nu>0$ be an arbitrary constant. For the objective function
	$J(y,\shape):=\mathcal{J}(y,\shape)+ \mathcal{J}_\text{reg}(\Gamma)$
	with 
	\begin{align}\label{tracking}
	\mathcal{J}(y,\shape)& := \frac{1}{2}\int_{\mathcal{X}} \left(y - \bar{y}\right)^2 \mathrm{d} x,\\
	\mathcal{J}_\text{reg}(\Gamma)&:=\nu \int_{\Gamma} 1 \, \mathrm{d} s\label{regularization}
	\end{align}
	we consider 
	\begin{align}
	\min\limits_{\shape\in\mathcal{U}}\; J(y,\shape)\label{eq_minimization}
	\end{align}
	constrained by the  obstacle type variational inequality
	\begin{align}
	\label{VI_general}
	a(y,v-y)\geq \left<f,v-y\right> \quad\forall v\in K:=\{\theta \in H^1_0(\mathcal{X})\colon \theta(x)\leq \varphi(x) \text{ in }\mathcal{X}\},
	\end{align}
	where $y \in K$ is the solution of the VI, $f\in L^2(\mathcal{X})$ is explicitly dependent on the shape, $\left<\cdot,\cdot\right>$ denotes the duality pairing and $a(\cdot,\cdot)$ is a general strongly elliptic, i.e. coercive, symmetric bilinear form 
	\begin{align}\label{bilinearform}
	\begin{split}
	a\colon H_0^1(\mathcal{X})\times H_0^1(\mathcal{X}) &\rightarrow \mathbb{R} \\
	(y, v) &\mapsto \int_{\mathcal{X}} \underset{i,j}{\sum}   a_{i,j}\partial_iy  \partial_jv + \underset{i}{\sum}d_i( \partial_iy v + y \partial_iv) +  byv \; \mathrm{d} x
	\end{split}
	\end{align} 
	defined by coefficient functions $a_{i,j}, d_j, b\in L^\infty(\mathcal{X})$ fulfilling the weak maximum principle, {where $\partial_i$ denotes the partial derivative to the $i$-th component.}
	
	With the tracking-type objective $\mathcal{J}$ the model is fitted to data measurements $\bar{y}\in H^1(\mathcal{X})$.
	The second term $\mathcal{J}_\text{reg}$ in the objective function $J$ is a perimeter regularization. 
	In (\ref{VI_general}), $\varphi$ denotes an obstacle which needs to be an element of $L^1_{\text{loc}}(\mathcal{X})$ such that 
	the set of admissible functions $K$ is non-empty (cf.~\cite{SokoZol}).
	If additionally $\partial\mathcal{X}$ is Lipschitz and $\varphi \in H^1(\mathcal{X})$ with $\varphi_{\vert\partial\mathcal{X}} \geq 0$, then there is a unique solution to (\ref{VI_general}) satisfying $y\in H_0^1(\mathcal{X})$, given that the assumptions from above hold  (cf.~\cite{ito2000optimal,kinderlehrer1980introduction,troianiello2013elliptic}).
	Further, (\ref{VI_general}) can be equivalently expressed as
	\begin{align}\label{PDE}
	a(y,v)+(\lambda,v)_{L^2(\mathcal{X})} &= (f,v)_{L^2(\mathcal{X})} \quad \forall v\in H_0^1( \mathcal{X})
	\end{align}
	\begin{align}\label{VI_conditions}
	\begin{split}
	\lambda &\geq 0 \quad \text{in } \mathcal{X} \\
	y  &\leq \varphi \quad \text{in } \mathcal{X} \\
	\lambda(y-\varphi) &= 0 \quad \text{in } \mathcal{X}
	\end{split}
	\end{align}
	with $(\cdot,\cdot)_{L^2(\mathcal{X})}$ denoting the $L^2$-scalar product and $\lambda\in L^2(\mathcal{X})$.
	It is well-known, e.g., from \cite{kinderlehrer1980introduction}, that under these assumptions there exists a unique solution $y$ to the obstacle type variational inequality (\ref{VI_general}) and an associated Lagrange multiplier $\lambda$. 
	We assume this situation, which is also found in \cite{ItoKunisch_VI}, giving us $\lambda\in L^2(\mathcal{X})$. It can be easily verified that this in turn gives the possibility to summarize the conditions (\ref{VI_conditions}) equivalently into a single condition of the form 
	\begin{equation}
	\label{lambda}
	\lambda=\max\big(0,\lambda+\mathcal{C} (y-\varphi) \big)\quad \text{for any }	\mathcal{C}>0.
	\end{equation}
	In the following, we denote the active set corresponding to (\ref{PDE}) and (\ref{VI_conditions}) by $$A:=\{x\in\mathcal{X}\colon y-\varphi \geq 0 \}.$$

	After formulating the optimization problem under consideration and introducing the active set, we continue with the formulation of an optimality system for VI constrained shape optimization in the next section.
	{
		For this, we will focus on Gâteaux semiderivatives to derive the state and adjoint system and on the Eulerian derivative of the shape functional to set up the design equation.
		For further details on Gâteaux semiderivatives as well as on the Eulerian derivative we refer to \cite{SokolowskiZolesio1992,Delfour2020}.
		Given a functional $\mathcal{F}(\shape, y, z)$ depending on a shape $\shape$ and elements $y, z$ of topological spaces, we will denote the (total) Gâteaux semiderivative of $\mathcal{F}$ by $d^{G} \mathcal{F}$. 
		The notation $\partial_{y}^{G} \mathcal{F}(\shape, y, z)[\hat{y}]$ (and $\partial_{z}^{G} \mathcal{F}(\shape, y, z)[\hat{z}]$) means the Gâteaux semiderivative of $\mathcal{F}$ with respect to $y$ (and $z$) in direction $\hat{y}$ (and $\hat{z}$). 
		The Eulerian derivative of $\mathcal{F}$ at $\shape$ in direction $V$ is denoted by $\partial_{\shape}^G \mathcal{F}(\shape, y, z)[V]$.
		In order to be consistent with this notation we denote the material derivative by $d_m$.
	Given a functional $\mathcal{G}(y_1, \ldots, y_n)$ depending on elements $y_1,\dots,y_n$ of topological spaces we denote its Gâteaux semiderivative with respect to $i$-th component $y_i$ by $\partial_i^G \mathcal{G}$.
	}

	%%%%%%%%%%%%%%%%%%%%%%%%%%%%%%%%%%%%%%%%%%%%%%%%%%%%%%%%%%%%%%%%%%%%%%%%%%%%%%%%%%%%%%%%%%%%%%%%%%%%%%%%%%%%%%%%%%%%%%%%%%%%%%%%%%%%%%%%%%%%%%%%%%%%%%%%%%%%%%%%%%%%%%%%%%%%%%%%%%%%%%%%%%%%%%%%%%%%%%%%%%%%%%%%%%%%%%%%%%%%%%%%%%%%%%%%%%%%%%%%%%%%%%%%%%%%%%%%%%%%%%%%%%%%%%%%%%%%%%%%%%%%%%%%%%%%%%%%%%%%%%%%%%%%%%%%%%%%%%%%%%%%%%%%%%%%%%%%%%%%%%%%%%%%%%%%%%%%%%%%%%%%%%%%%%%%%%%%%%%%%%%%%%%%%%%%%%%%%%%%%%%%%%%%%%%%%%%%%%%%%%%%%%%%%%%%%%%%%%%%%%%%%%%%%%%%%%%%%%%%%%%%%%%%%%%%%%%%%%%%%%%%%%%%%%%%%%%%%%%%%%%%%%%%%%%%%%%%%%%%%%%%%%%%%%%%%%%%%%%%%%%%%%%%%%%%%%%%%%%%%%%%%%%%%%%%%%%%%%%%%%%%%%%%%%%%%%%%%%%%
	
	\section{Optimality system for VI constrained shape optimization}
	\label{sec:OptSystem}
	
	\def\Dom{\shape}
	\def\Hil{{\mathcal H}}
	\def\Lag{\mathscr{L}}
%	\KW{@Volker: Nico hat schon einmal die Ableitungsnotation angepasst, aber den Rest (insbesondere Text) wollten wir von dir nicht einfach abaendern. Passt du das Kapitel entsprechend an?}
%	
	In this section, we briefly discuss necessary optimality conditions for non-smooth shape optimization problems in our setting and terminology. Although shapes do not define a linear space, the shape derivative can be viewed as a directional derivative in the space of deformations  of the shape under investigation. This aspect is investigated further in \cite{Schmidt-Schulz-2023}, where a linear deformation space framework is established. We consider a space $Y$ as an appropriate vector space of deformations, such that the set $\mathcal{U}$ of admissible shapes $\shape$ is constructed as ${\cal S^\text{adm}}=\{T(\Dom^0) \colon T\in Y\}$, where $\Dom^0$ is a reference starting domain, which is assumed to be a subset of the open hold-all domain $D$. Thus, we can write the shape derivative of a functional $\mathcal{J}:\Omega\mapsto \mathbb{R}$ as
\[
d\mathcal{J}(\shape)[V] = \partial^G_W\mathcal{J}((I+W)(\shape))[V]
\]	
i.e., as a Gâteaux semiderivative with respect to $W$, where $I$ denotes the identity deformation and $W$ an arbitrary small deformation. We denote this derivative for convenience as $d^G\mathcal{J}(\shape)[V]$. Thus, for functionals with more arguments like above, we define
\begin{align*}
d^G\mathcal{F}(\shape, y(\shape),p(\shape))[V]&=d^G_W((I+W)(\shape), y((I+W)(\shape)),p((I+W)(\shape)))[V]\\
\partial^G_\shape\mathcal{F}(\shape, y(\shape),p(\shape))[V]&=d^G_W((I+W)(\shape), y(\shape),p(\shape))[V]
\end{align*}
%\GN{Nico:
%	Wie passt das mit unserer Ableitungsnotation zusammen? 
%	$\red{\partial^G_\shape}\mathcal{F}(\shape, y(\shape),p(\shape))[V]=\red{d^G_W}((I+W)(\shape), y(\shape),p(\shape))[V]$
%} \KW{Glaub das sollte so passen mit der obigen Notation, die ich vorschlage, oder?}
%\GN{Nico: Sehe ich nicht, kann aber auch an mir liegen.}
Partial Gâteaux derivatives with respect to the other arguments are denoted as $\partial^G_y\mathcal{F}$ and 	$\partial^G_p\mathcal{F}$.

	We consider constrained shape optimization problems of the following form: 
	
	\begin{align}\label{sec3-gen-obj}
	&\min_{\shape\in \mathcal{U}}\quad J(\Dom,y)\\\label{sec3-gen-con}
	&\text{ s.t. } \, b(c(\shape,y),p)_\Dom=0\quad \forall\,p\in \Hil(\Dom)
	\end{align}
	Here, $\Hil(\Dom)$ is a Hilbert space defined on the shape $\Dom$ containing the state variable $y\in\Hil(\Dom)$ and $b(\cdot,\cdot)_\Dom$ is a bilinear and in $\Hil(\Dom)$ a coercive form. 
	Moreover, $\mathcal{U}$ is the set of admissible shapes, i.e., an appropriate shape space.
	We assume that the mapping $c$ is Gâteaux semidifferentiable, and that the constraint (\ref{sec3-gen-con}) defines a unique solution $y(\Dom,f)$ on any shape $\Dom$ under consideration. 
	
	Because $y(\Dom)$ is assumed to satisfy the constraint, we may write for arbitrary $p(\Dom)\in \Hil(\Dom)$
	\[
	J(\Dom,y(\Dom))=J(\Dom,y(\Dom))+b(c(\Dom,y(\Dom)),p(\Dom))_\Dom.
	\]
	In order to derive necessary conditions of optimality, we differentiate the right-hand side with respect to $\Dom$, i.e.~compute the shape derivative, and simplify the expressions by introducing the notation
	\[
	\Lag(\Dom, y, p):=J(\Dom,y)+b(c(\Dom,y),p)_\Dom,
	\]
	where we keep in mind the implicit dependence of $y,p$ on $\Dom$.  
	Thus, the chain rule yields
	\begin{align*}
%	d^G \Lag(\Dom, y, p)[V]=\partial^G_1\Lag(\Dom, y, p)[V]+\partial^G_2\Lag(\Dom, y, p)D_{Hm}y
%	+\partial^G_3\Lag(\Dom, y, p)D_{Hm}p\\
	d^G \Lag(\Dom, y, p)[V]=\partial^E_{\Dom}\Lag(\Dom, y, p)[V]+\partial^G_y\Lag(\Dom, y, p)d_my
		+\partial^G_p\Lag(\Dom, y, p)d_{m}p
	\end{align*}
	for all $V\in Y$.
%	where $\partial^G_i$  denote the Gâteaux partial semiderivative with respect to the $i$-th argument ($i\in\{\Dom, y, p\}$).

	Since $y$ satisfies the state equation (\ref{sec3-gen-con}) in variational form, which is linear in $p$, we observe 
	\begin{align}\label{eq:variational_state_eq}
%	\partial^G_3\Lag(\Dom, y, p)D_{Hm}p=0.\\
	\partial^G_p\Lag(\Dom, y, p)d_m p=0.
	\end{align}
	Furthermore, we obtain
	\begin{align*}
%	\partial^G_2\Lag(\Dom, y, p)D_{Hm}y &= \partial^G_2 J(\Dom, y)D_{Hm}y+
%	b(\partial^G_2c(\Dom,y)D_{Hm}y,p)_\Dom\\
	\partial^G_y\Lag(\Dom, y, p)D_{Hm}y &= \partial^G_y J(\Dom, y)d_{m}y+
		b(\partial^G_yc(\Dom,y)d_{m}y,p)_\Dom
	\end{align*}
	and, thus, we may obtain $p$ from the Gâteaux adjoint equation in variational form:
	\begin{align}\label{sec3-adjoint}
%	\partial^G_2 J(\Dom, y)\tilde{y}+
%	b(\partial^G_2 c(\Dom,y)\tilde{y},p)_\Dom = 0\quad \forall\,\tilde{y}\in \Hil(\Dom)\\
	\partial^G_y J(\Dom, y)\tilde{y}+
		b(\partial^G_y c(\Dom,y)\tilde{y},p)_\Dom = 0\quad \forall\,\tilde{y}\in \Hil(\Dom)
	\end{align}
	The solvability of the Gâteaux adjoint equation is in question in this rather general set-up. Therefore, we take it for granted now and show solvability, when confronted with the particular model problem as in the next section.
	Now, if $y$ satisfies the  state equation (\ref{sec3-gen-con})  and $p$ satisfies the Gâteaux adjoint equation (\ref{sec3-adjoint}), then the  Gâteaux shape semiderivative is given by
	\begin{align*}
%	d^G \Lag(\Dom, y, p)[V]=\partial^G_1\Lag(\Dom, y, p)[V].\\
	d^G \Lag(\Dom, y, p)[V]=\partial^{G}_{\Dom}\Lag(\Dom, y, p)[V].
	\end{align*}
	Nevertheless, it is a manually easier way to compute the  Gâteaux shape semiderivative of the full Lagrangian by employing shape and Gâteaux calculus and later on eliminate expressions relating to the state and  Gâteaux  adjoint equation, as exemplified in the next section.

	First we formulate necessary conditions of optimality for the minimization problem.
	\begin{theorem}
		\label{nec_uncon}
		We assume that the function $J\colon X\to \mathbb{R}$, where $X$ is a Banach space, is Gâteaux semidifferentiable and that $\hat{x}\in X$ is a local minimum of $J$. Then,  there holds
		\[
		%		\lim_{t \searrow 0}{d^G }J(\hat{x}+tv)[v]\ge 0 \qquad \forall\, v\in X.
		{d^G }J(\hat{x})[v]\ge 0 \qquad \forall\, v\in X.
		\] 
	\end{theorem}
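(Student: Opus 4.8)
The plan is to test the local minimum against the simplest possible admissible semitrajectory through $\hat x$, namely the affine ray, and then pass the minimality inequality to the limit. Concretely, fix an arbitrary direction $v\in X$ and define $h\colon[0,\tau)\to X$ by $h(t)\coloneqq \hat x+tv$. Since $X$ is a Banach space, and in particular a topological vector space, $h$ is well defined, $h(0)=\hat x$, and the one-sided limit $h'(0^+)=\lim_{t\searrow 0}\tfrac{h(t)-h(0)}{t}=v$ exists in $X$; hence $h$ is an admissible semitrajectory at $\hat x$ with semitangent $v$. By the defining property of Hadamard semidifferentiability of $J$ at $\hat x$, the one-sided derivative $(J\circ h)'(0^+)=\lim_{t\searrow 0}\tfrac{J(h(t))-J(h(0))}{t}$ exists and equals $d_HJ(\hat x)[h'(0^+)]=d_HJ(\hat x)[v]$.

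Next I would exploit local minimality. There is a neighborhood $\mathcal{N}$ of $\hat x$ with $J(\hat x)\le J(z)$ for all $z\in\mathcal{N}$. Since $t\mapsto \hat x+tv$ is continuous at $t=0$ (continuity of addition and scalar multiplication in a topological vector space), there exists $\tau'\in(0,\tau]$ such that $h(t)\in\mathcal{N}$ for all $t\in[0,\tau')$. Consequently $J(h(t))-J(h(0))\ge 0$, and therefore the difference quotient $\tfrac{J(h(t))-J(h(0))}{t}\ge 0$, for every $t\in(0,\tau')$. Letting $t\searrow 0$, the nonnegative difference quotients converge to $(J\circ h)'(0^+)=d_HJ(\hat x)[v]$, which is thus $\ge 0$. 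As $v\in X$ was arbitrary, this proves $d_HJ(\hat x)[v]\ge 0$ for all $v\in X$.

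The argument is essentially routine; the only point that deserves care is the verification that the affine ray genuinely qualifies as an admissible semitrajectory in the sense of the preceding definition, so that the characterizing identity of the Hadamard semiderivative may legitimately be invoked — but this is immediate in any topological vector space. I do not expect any real obstacle here: in contrast to the Gâteaux/Fréchet setting, no linearity of $v\mapsto d_HJ(\hat x)[v]$ is needed, because the inequality is established directionwise.
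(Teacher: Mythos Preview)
Your proof is correct and follows essentially the same approach as the paper: test the affine ray $t\mapsto \hat x+tv$, use minimality to obtain nonnegativity of the difference quotient, and pass to the limit via the definition of the Hadamard semiderivative. If anything, your version is slightly more careful in explicitly verifying that the affine ray is an admissible semitrajectory and in handling the \emph{local} minimum via a neighborhood argument.
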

	
	\begin{proof}
		%Since $J$ is local Lipschitz it is Gâteaux semidifferentiable, see e.g. \red{SOURCE}.
		Due to the fact that $\hat{x}$ is the minimum, there holds $J(z)\ge J(\hat{x})$ for all $z\in X$. We choose in particular $z:=\hat{x}+tv$ for an arbitrary $v\in X$ and $t>0$. From this, we conclude
		\[
		\frac{1}{t}\left(J(\hat{x}+tv)- J(\hat{x})\right)\ge 0\, ,
		\]
		and thus we obtain the assertion by using the definition of the Gâteaux semideri\-vative.
		%		\[
		%		\lim_{t \searrow 0}{d^G }J(\hat{x}+tv)[v]\ge 0\, .
		%		\] 
	\end{proof}

	From Theorem \ref{nec_uncon}, we conclude now the necessary condition of optimality for an optimal shape $\shape$ as
	\begin{equation}\label{Nshape_nec}
	\partial^G_\shape \Lag(\Dom, y, p)[V]\ge 0\quad \forall\, V \in Y,
	\end{equation}
	where $y$ satisfies the state equation (\ref{sec3-gen-con})  and $p$ satisfies the Gâteaux adjoint equation (\ref{sec3-adjoint}). 
	This means that we can  observe from theorem \ref{nec_uncon} that the Gâteaux (shape) semiderivative can be used to characterize necessary optimality conditions.	For further exploration on first order optimality conditions in the Gâteaux framework, we refer to the literature, e.g.,  \cite{Br-Gill-1973} 
		
	In many cases, as is demonstrated in the next section, the Gâteaux shape semiderivative is continuous%in $t \searrow 0$, 
, although the constraints of the shape optimization problem are only semismooth. Then, the necessary condition is just the usual homogeneity of the shape derivative. In this case, the (Gâteaux) shape (semi)derivative can be used in order to define a descent direction for algorithmical purposes. Nevertheless, finding a descent direction from the Gâteaux  semiderivative is a challenge in general.

	In the next section, we study weak formulations of elliptic problems. These are typically formulated in the Sobolev space $H^1(\Dom)$ of weakly differentiable $L^2$-functions. For the standard elliptic heat-equation-type problem, the solution is mostly in $H^2(\Dom)\subset H^1(\Dom)$ and, thus, their material derivative again in $H^1(\Dom)$.
	%due to Definition %\ref{def:NewtonShapeDerivativeRelation_P}
%\ref{def:material_Newton}. 
However, in the context of variational inequalities, the solution is only piecewise $H^2(\Dom)$, which means that material derivatives cannot be used as test functions like in (\ref{eq:variational_state_eq}). A similar problem arises in discontinuous Galerkin approximations, from where we borrow the notion of a ``broken'' Sobolev space here, which is analyzed in detail in \cite{CARSTENSEN-2016}. This concept is based on a disjoint partitioning $\Dom_h$ of open subsets $\mathcal{K}\subset\Dom$ with Lipschitz boundaries such that
	$\overline{\cup_{\mathcal{K}\in\Dom_h}\mathcal{K}}=\Dom$. Then one defines
	\[
	H^1(\Dom_h):=\{\shape\in L^2(\Dom)\, :\, \shape|_\mathcal{K}\in H^1(\mathcal{K}), \mathcal{K}\in \Dom_h\}
	\]
	In \cite{CARSTENSEN-2016}, this space is used as test space and shown that a resulting weak formulation of the standard elliptic problem exists which inherits stability and, thus, existence of a unique solution. Thus, we mean this more general weak formulation in the following, whenever a test function is used, which is only piecewise $H^1$.
	
	%%%%%%%%%%%%%%%%%%%%%%%%%%%%%%%%%%%%%%%%%%%%%%%%%%%%%%%%%%%%%%%%%%%%%%%%%%%%%%%%%%%%%%%%%%%%%%%%%%%%%%%%%%%%%%%%%%%%%%%%%%%%%%%%%%%%%%%%%%%%%%%%%%%%%%%%%%%%%%%%%%%%%%%%%%%%%%%%%%%%%%%%%%%%%%%%%%%%%%%%%%%%%%%%%%%%%%%%%%%%%%%%%%%%%%%%%%%%%%%%%%%%%%%%%%%%%%%%%%%%%%%%%%%%%%%%%%%%%%%%%%%%%%%%%%%%%%%%%%%%%%%%%%%%%%%%%%%%%%%%%%%%%%%%%%%%%%%%%%%%%%%%%%%%%%%%%%%%%%%%%%%%%%%%%%%%%%%%%%%%%%%%%%%%%%%%%%%%%%%%%%%%%%%%%%%%%%%%%%%%%%%%%%%%%%%%%%%%%%%%%%%%%%%%%%%%%%%%%%%%%%%%%%%%%%%%%%%%%%%%%%%%%%%%%%%%%%%%%%%%%%%%%%%%%%%%%%%%%%%%%%%%%%%%%%%%%%%%%%%%%%%%%%%%%%%%%%%%%%%%%%%%%%%%%%%%%%%%%%%%%%%%%%%%%%%%%%%%%%%%
	
	\section{Application to shape optimization for contact problems}
	\label{sec:Application}

		In this section, we apply the Gâteaux semiderivative approach to  a contact problem. In particular, we introduce the  Lagrangian of the problem, compute its Gâteaux semiderivative and focus  on a Gâteaux adjoint associated with the system under consideration.

	\paragraph{Gâteaux adjoint equation.}
	
	Since the perimeter regularization (\ref{regularization}) is only used due to technical reasons to overcome ill-posedness of inverse problems (cf., e.g.,~\cite{Burger-2004}) and does not influence the adjoint system, we omit it for our investigations in the following. Thus, we consider the (reduced) Lagrangian function to the minimization of (\ref{tracking}) constrained by 
	\begin{align}\label{PDElambda}
	a(y,v)+(\max\big(0,\lambda+\mathcal{C} (y-\varphi) \big),v)_{L^2(\mathcal{X})} &= (f,v)_{L^2(\mathcal{X})} \quad \forall v\in H_0^1( \mathcal{X}),
	\end{align}
	which is given by 
	\begin{equation}
	\label{Lagrangian}
	\begin{split}
	\Lag(\shape,y,v) = 
	&\, \frac{1}{2} \int_{\mathcal{X}}(y-\bar{y})^{2}\, \mathrm{d} x-a(y,v) -\int_{\mathcal{X}} fv\, \mathrm{d} x\\ &+\int_{\mathcal{X}} \max \{0, \lambda+\mathcal{C} (y-\varphi)\}v \,  \mathrm{d} x,
	\end{split}
	\end{equation}
	to formulate the Gâteaux adjoint equation to the model problem (\ref{eq_minimization})--(\ref{VI_general}) by computing $\partial^G_y\Lag(\shape,y,v)$.

	In order to compute $\partial^G_y\Lag(\shape,y,v)$, we consider a variation of $y$. 
	Let $t>0$ and  $\tilde{y}\in H^1_0(\mathcal{X})$. 
	Then, we get
	\begin{align*}
	\partial^G_y \Lag(\shape,y,v)[\tilde{y}]
	&=\partial^G_t \Lag(\shape,y+t\tilde{y},v)\\
	=& \int_{\mathcal{X}}(y-\bar{y})  \tilde{y} \, \mathrm{d} x  - a(\tilde{y},v) +\int_{\mathcal{X}} \partial^G_t \left(\max \{0, \lambda+\mathcal{C} (y + t \tilde{y}-\varphi)\}v \right)\,  \mathrm{d} x.
	\end{align*}
	Using the chain rule we obtain 
	\begin{align*}
	\partial^G_t&\left(\max \{0, \lambda+\mathcal{C} (t \tilde{y}-\varphi)\}v \right)\\
	&=\partial^G_y(\max \{ 0, \cdot \})(\lambda + \mathcal{C} ( y - \varphi))  \,\,[\partial^G_t(\lambda+\mathcal{C} (t \tilde{y}-\varphi))\, v]\\
	&=\partial^G_y(\max \{ 0, \cdot \})(\lambda + \mathcal{C} ( y - \varphi))  \,\,[\mathcal{C} \tilde{y}v].
	\end{align*}

	%	Combining the Gâteaux derivative of the maximum function given in \red{???} with the equality
	
	{
	Then the Gâteaux semiderivative yields 
	\begin{align}
	\label{equality_max}
	\begin{split}
		\partial^G_t &\left(\max \{0, \lambda+{\mathcal{C}} (y + t \tilde{y}-\varphi)\}v \right)\\
		&=	\begin{cases}
		\mathcal{C}\tilde{y}v	,			& \lambda+{\mathcal{C}} (y + t \tilde{y}-\varphi) > 0, \\
		\max\{0,\mathcal{C}\tilde{y}v\},	& \lambda+{\mathcal{C}} (y + t \tilde{y}-\varphi) = 0,\\
		0	,			& \lambda+{\mathcal{C}} (y + t \tilde{y}-\varphi) < 0.	\end{cases}\\
		&=	\begin{cases}
		\mathcal{C}\tilde{y}v	,			& \text{  in } A, \\
		\max\{0,\mathcal{C}\tilde{y}v\},	& \text{  in } \{x \in \mathcal{X}: y=\varphi \}, \\
		0	,								& \text{  in } \{x \in \mathcal{X}: y<\varphi \}.	\end{cases}
	\end{split}
	\end{align}
	
	\noindent
	This results in
	\begin{align*}
		d_t^G\Lag(\shape,y+t\tilde{y},v)
		=& \int_{\mathcal{X}}(y-\overline{y})  \tilde{y} \, \mathrm{d} x  
		-a(\tilde{y},v) 
		+ \int_{\mathcal{X}} \mathbbm{1}_A\, \mathcal{C}\tilde{y}v \,  \mathrm{d} x \\
		&+ \int_{\{x \in \mathcal{X}:\  y=\varphi \}} \max\{0,\mathcal{C}\tilde{y}v\} \, \mathrm{d} x ,
	\end{align*}
	where $ \mathbbm{1}_A$ denotes the indicator function on the active set $A$.
	As a result, the Gâteaux adjoint equation is given in its weak form by
	\begin{align*}
		\int_{\mathcal{X}}(y-\overline{y})  \tilde{y} \, \mathrm{d} x  -a(\tilde{y},v) 
		=& - \int_{\mathcal{X}} \mathbbm{1}_A\,  \mathcal{C}v\tilde{y} \, \mathrm{d} x\\
		&- \int_{\{x \in \mathcal{X}:\ y=\varphi \}}  \max\{0,\mathcal{C}\tilde{y}v\} \, \mathrm{d} x 
		\quad\forall\, \tilde{y}\in H_0^1(\mathcal{X}). \nonumber
	\end{align*}

	\begin{remark}
		The additional integral over $\max\{0,\mathcal{C}\tilde{y}v\}$ on $\{x \in \mathcal{X}: y=\varphi \}$ seems to hold further challenges.
		However, numerical experiments have shown, that this expression never holds any numerical significance; see \cite{Suchan24}.
		As a consequence, we are going to neglect it and assume that $\max\{0,\mathcal{C}\tilde{y}v\}$ is of measure zero, and thus
		\begin{align}
		\label{eq:NewtonAdjoint}
		\int_{\mathcal{X}}(y-\overline{y})  \tilde{y} \, \mathrm{d} x  -a(\tilde{y},v) 
		=& - \int_{\mathcal{X}} \mathbbm{1}_A\,  \mathcal{C}v\tilde{y} \, \mathrm{d} x
		\end{align}
		holds.
		This results in the necessity of a safe guard technique, since we can not be sure that the Gâteaux semiderivative provides a descent direction.
		Such a technique is presented in \cite{Luft2020}.
	\end{remark}
}

	\paragraph{Gâteaux semiderivative of the (full) Lagrangian}
	In order to set up the optimality system to the model problem (\ref{eq_minimization})--(\ref{VI_general}), we need the Gâteaux semiderivative of the (full) Lagrangian
	$\Lag_{\text{full}}(y, \shape,v) = \Lag(y, \shape,v)   + \mathcal{J}_\text{reg}(\Gamma),$
	where $\Lag$ denotes the (reduced) Lagrangian (\ref{Lagrangian}).
	The Gâteaux semiderivative of $\Lag_{\text{full}}$ is given by the sum of the  Gâteaux semiderivative of the (reduced) Lagrangian (\ref{Lagrangian}) and the shape derivative of $\mathcal{J}_{\text{\emph{reg}}}$. Standard calculation techniques yield the shape derivative of $\mathcal{J}_{\text{\emph{reg}}}$, which is given by $d^E\mathcal{J}_{\text{\emph{reg}}}(\Gamma)[\cdot]= \nu\int_{\Gamma}\kappa\left<\cdot,n\right> \mathrm{d} s$ with $\kappa:=\text{div}_{\Gamma}(n)$ denoting the mean curvature of~$\Gamma$. 
	
	The next lemma gives the Gâteaux semiderivative of the Lagrangian.
	
	\begin{lemma}
		\label{le:NewtonShapeDerivativeOfL}
		Let $ \varphi \in H^{2}(\mathcal{X})$, $f \in L^{2}(\mathcal{X})$, $\bar{y} \in H^{1}(\mathcal{X})$, $v \in H^1_0(\mathcal{X})$ and $\lambda \in L^2(\mathcal{X})$.
		Then,
		\begin{equation}
		\label{Example:ShapeDeriv}
		\begin{split}
		d^G\mathscr{L}(\shape,y,v)[V]
		=& \int_{\mathcal{X}} \operatorname{div}(V)\left[\frac{1}{2}(y-\bar{y})^{2}+\nabla y^{\top} \nabla v - fv\right] \, \mathrm{d} x\\
		&- \int_{\mathcal{X}} (y - \bar{y}) \nabla\bar{y}^{\top}V \, \mathrm{d} x+ \int_{\mathcal{X}}  \nabla f^{\top}V\,v \,  \mathrm{d} x\\
		&- \int_{\mathcal{X}} \sum_{i,j}a_{i,j}\left( -\partial^G_j v\sum_l \partial^G_l y \,\partial^G_i V_l - \partial^G_i y \sum_{l}\partial^G_lv\,\partial^G_jV_l\right) \, \mathrm{d} x \\
		&- \int_{\mathcal{X}} \sum_i d_i\left( -v\sum_l\partial^G_ly\,\partial^G_iV_j - y\sum_l\partial^G_lv\,\partial^G_iV_j \right) \, \mathrm{d} x \\
		&	+ \int_{A} (\varphi - \bar{y})\nabla \varphi^\top V \, \mathrm{d} x.
		\end{split}
		\end{equation}
%		\GN{Nico: Die Objekte hier müssten alle differenzierbar im üblichem Sinne sein (Also die $\partial_i$).} \KW{Müssen wir das fordern irgendwo?}
%		\GN{Ich glaube nicht, das sind halt die Üblichen Objekte die da auftauchen.}
	\end{lemma}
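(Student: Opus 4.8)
The plan is to compute $D_H\mathscr{L}(\shape,y,v)[V]$ by splitting the Lagrangian \eqref{Lagrangian} into three domain integrals — the tracking term $\frac12\int_\mathcal{X}(y-\bar y)^2\,\mathrm dx$, the bilinear form $a(y,v)$ together with the source term $\int_\mathcal{X}fv\,\mathrm dx$, and the penalty term $\int_\mathcal{X}\max\{0,\lambda+c(y-\varphi)\}v\,\mathrm dx$ — and applying Lemma~\ref{le:D_N(int p_t)} to each piece. For each domain integral $\int_\shape g\,\mathrm dx$ this reduces the task to computing $D_{Hm}g + \operatorname{div}V\,g$, so the $\operatorname{div}(V)$ factor in front of the bracket in \eqref{Example:ShapeDeriv} is immediate; what remains is to evaluate the Hadamard material semiderivatives of the three integrands.

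First I would treat the tracking term. Since $\bar y\in H^1(\mathcal{X})$ is a fixed data function, its Hadamard shape semiderivative vanishes, so $D_{Hm}\bar y = d_H\bar y[V] = \nabla\bar y^\top V$, while $D_{Hm}y$ is the Hadamard material semiderivative of the state (which will be eliminated later against the adjoint equation \eqref{eq:NewtonAdjoint}, so I keep it symbolic or observe it cancels once the adjoint is invoked — in the statement as written it has already been eliminated using \eqref{eq:NewtonAdjoint}). The product rule of Lemma~\ref{le:D_Nm Rules}(i) gives $D_{Hm}\big(\tfrac12(y-\bar y)^2\big) = (y-\bar y)(D_{Hm}y - \nabla\bar y^\top V)$, producing the term $-\int_\mathcal{X}(y-\bar y)\nabla\bar y^\top V\,\mathrm dx$. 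Next, for the bilinear form I apply Lemma~\ref{Lemma:Bilinearform} directly to $D_{Hm}$ of the integrand $\sum_{i,j}a_{i,j}\partial_i y\,\partial_j v + \sum_i d_i(\partial_i y\,v + y\,\partial_i v) + byv$; the coefficient functions $a_{i,j},d_i,b$ are fixed $L^\infty$ functions with vanishing material derivative, so all the $D_{Hm}(\text{coefficient})$ terms drop, and the surviving contributions are exactly the $-\sum_l\partial_l y\,\partial_i V_l$ type correction terms of Lemma~\ref{le:D_Nm Rules}(ii) together with the material derivatives $D_{Hm}y$, $D_{Hm}v$ — the latter again cancelling against the adjoint/state equations. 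For the source term, $D_{Hm}(fv) = \nabla f^\top V\,v + f\,D_{Hm}v$, giving $\int_\mathcal{X}\nabla f^\top V\,v\,\mathrm dx$ up to the $D_{Hm}v$ piece.

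The only genuinely delicate step is the penalty term $\int_\mathcal{X}\max\{0,\lambda+c(y-\varphi)\}v\,\mathrm dx$. Here I would use \eqref{equality_max} to replace the integrand on the active set $A$ by $(\lambda+c(y-\varphi))v$ and by $0$ on $\mathcal{X}\setminus A$. On $A$, after invoking the complementarity condition $\lambda = -c(y-\varphi)$ there (so $y=\varphi$ on $A$) and combining with the tracking contribution restricted to $A$, one is left with $\int_A(\varphi-\bar y)\nabla\varphi^\top V\,\mathrm dx$, using $D_{Hm}\varphi = \nabla\varphi^\top V$ since $\varphi\in H^2(\mathcal{X})$ is a fixed obstacle and $D_{Hs}\varphi=0$. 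The subtlety — and the main obstacle — is that $A$ itself moves with the perturbation $F_t$, and $\mathbbm{1}_A$ is not shape differentiable in the classical sense; this is precisely where the broken Sobolev space framework of section~\ref{sec:NewtonScheme} is needed, since $y$ is only piecewise $H^2$ and $D_{Hm}y$, $D_{Hm}v$ are only piecewise $H^1$. I would justify the manipulation by noting that the integrand $\max\{0,\lambda+c(y-\varphi)\}$ is Lipschitz in its argument and the Hadamard semiderivative of $\max\{0,\cdot\}$ from Example~\ref{ex:1}, together with the regularity $\varphi\in H^2$, $y\in H^1_0$, makes the formal differentiation legitimate inside the integral; the boundary-of-$A$ contribution drops because $\lambda+c(y-\varphi)=0$ there. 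Finally I collect all terms, group the $D_{Hm}y$ and $D_{Hm}v$ contributions and discard them by appeal to the state equation \eqref{PDElambda} and the Hadamard adjoint equation \eqref{eq:NewtonAdjoint}, arriving at \eqref{Example:ShapeDeriv}.
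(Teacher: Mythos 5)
Your proposal is correct and follows essentially the same route as the paper's own derivation in the appendix: the same splitting of the Lagrangian, Lemma~\ref{le:D_N(int p_t)} for the divergence term, Lemmas~\ref{le:D_Nm Rules} and~\ref{Lemma:Bilinearform} for the material semiderivatives, the treatment of the $\max$ term via \eqref{eq:D_N(max)}, \eqref{equality_max} and the measure-zero boundary $\partial A$, elimination of $D_{Hm}y$ and $D_{Hm}v$ through the state and Hadamard adjoint equations, and finally $y=\varphi$, $D_{Hm}\varphi=\nabla\varphi^\top V$ on the active set. One small imprecision: the identity $y=\varphi$ on $A$ follows directly from the definition $A=\{y-\varphi\ge 0\}$ together with the constraint $y\le\varphi$, not from a relation $\lambda=-c(y-\varphi)$; this does not affect the argument.
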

	
	To prove this Lemma we need the following result 
	
	\begin{lemma}
		\label{le:D_N(int p_t)}
		Let the family $F_t$ of transformations be differentiable in the usual sense.
		We define the domain integral $J(\shape)=\int_\shape g\, \mathrm d x$ for a function $g\colon  \shape \rightarrow \mathbb{R}$. Then, we have
		\begin{align*}
		d^E J(\shape)[V]
		= \int_{\shape} d_m {g} + \operatorname{div}V \, g\, \, \mathrm d x.
		\end{align*}
	\end{lemma}
	
	\begin{proof}
		A proof is given in \cite[Theorem 4.11]{Welker_diss} for the the case that $F_t$ is the perturbation of identity.
		If we are instead using the velocity method we have the following
		\begin{align*}
		\left.\left(\partial^G_t\right)_{|_{t=0^+}}\left(\int_{\Omega_{t}} g_t \mathrm{d} x_{t}\right)\right.
		& =\left.\left(\partial^G_t\right)_{|_{t=0^+}}\left(\int_{\Omega}\left(g_t \circ F_{t}\right) \cdot \operatorname{det}\left(d^E F_{t}\right) \mathrm{d} x\right)\right.\\ 
		& =\left.\int_{\Omega} 
		\left(\partial^G_t\right)_{|_{t=0^+}}\left(\left(g_t \circ F_{t}\right) \cdot \operatorname{det}\left(d^E F_{t}\right)\right) \right.
		\mathrm{d} x\\
		& =\left.\int_{\Omega} 
		\left(\partial^G_t\right)_{|_{t=0^+}}( g_t \circ F_{t} ) \operatorname{det}(d^E F_{t})
		+ ( g_t \circ F_{t} ) \frac{\partial^E}{\partial^E t}_{|_{t=0^+}} \hspace*{-.3cm}\left(\operatorname{det}(d^E F_{t}) \right) \right.
		\mathrm{d} x\\
		& =\left.\left.\int_{\Omega} 
		\left(\partial^G_t\right)_{|_{t=0^+}}( g_t \circ F_{t} )\right.\operatorname{det}(G(0))
		+ g \frac{\partial^E}{\partial^E t}_{|_{t=0^+}} \hspace*{-.3cm} \left(\operatorname{det}(G_{t}) \right)\right.
		\mathrm{d} x\\
		& =\left.\int_{\Omega} 
		d_m  g
		+ g \left(\partial^G_t\right)_{|_{t=0^+}} \hspace*{-.3cm} \left(\operatorname{det}(G_{t}) \right)\right.
		\mathrm{d} x\\
		& =\left.\int_{\Omega} 
		d_m  g
		+ g \left(\det(G(t)) \operatorname{tr}\left( G^{-1}(t) \left(\partial^G_t\right)_{|_{t=0^+}} G(t)  \right) \right)\right.
		\mathrm{d} x\\
		& =\int_{\Omega} 
		d_m  g
		+ g \det(G(0)) \operatorname{tr}\left( G^{-1}(0) d^EV  \right)
		\mathrm{d} x\\
		& =\int_{\Omega} 
		d_m  g
		+ g  \operatorname{tr}\left( d^EV  \right)
		\mathrm{d} x\\
		& =\int_{\Omega} 
		d_m  g
		+ g  \operatorname{div}\left(V  \right)
		\mathrm{d} x
		\end{align*}
		
	\end{proof}
	
	We are now able to prove \cref{le:D_N(int p_t)}.
	\begin{proof}
		For an easier understanding and notation purpose we define
		\begin{align*}
		\chi(y,v) \coloneqq \underset{i,j}{\sum}   a_{i,j}\partial^G_iy  \partial^G_jv + \underset{i}{\sum}d_i( \partial^G_iy v + y \partial^G_iv) +  byv
		\end{align*}
		such that
		\begin{align*}
		a(y,v) = \int_{\mathcal{X}} \chi(y,v) \, \mathrm{d} x.
		\end{align*}
		Let 
		\begin{equation}
		\label{G}
		\begin{split}
		G(\shape&,y,v)[V]\\
		\coloneqq& \int_{\mathcal{X}}
		d_m\left(\frac{1}{2}(y - \bar{y})^2 - \chi(y,v) + fv + \max\{0, \lambda + \mathcal{C} ( y- \varphi)\}v\right) \\
		& \,\,\,  \,\,\,+ \operatorname{div}(V)\left[\frac{1}{2}(y-\bar{y})^{2}+\chi(y,v) -fv\right] \, \mathrm{d} x.
		\end{split}
		\end{equation}
		We consider a variation $\shape_t=F_t(\shape)$ of $\shape$ in the following. Since $\mathcal{X}$ depends on $\shape$, we also use the notation $\mathcal{X}_t:=F_t(\mathcal{X})$. 
		We get
		\begin{align*}
		G&(\shape,y,v)[V]\\
		=& \int_{\mathcal{X}} \operatorname{div}(V)\left[\frac{1}{2}(y-\bar{y})^{2}+\chi(y,v) - fv\right] \\
		&\phantom{\int_{\mathcal{X}}}\,\, + 
		d_m\left(\frac{1}{2}(y - \bar{y})^2\right)
		- d_m(\chi(y,v))
		+ d_m(fv)\\
		&\phantom{\int_{\mathcal{X}}}\,\,
		+ d_m(\max\{0, \lambda + {\mathcal{C}} ( y- \varphi)\}v) \, \mathrm{d} x	\\
		=& \int_{\mathcal{X}} \operatorname{div}(V)\left[\frac{1}{2}(y-\bar{y})^{2}+\chi(y,v) - fv\right] \\
		&\phantom{\int_{\mathcal{X}}}\,\,+ (y - \bar{y})d_my -(y - \bar{y})d_m\bar{y}+ vd_mf + f d_mv  \\
		&\phantom{\int_{\mathcal{X}}}\,\, - d_m\left( \sum_{i,j} a_{i,j}
		\partial^G_i y  \partial^G_j v 
		+ \sum_i d_i
		( \partial^G_i y  v + y \partial^G_i v) 
		+  b  y v \right) \\
		&\phantom{\int_{\mathcal{X}}}\,\, +d_m(\max\{0, \lambda + {\mathcal{C}} ( y- \varphi)\})v + \max\{0, \lambda + {\mathcal{C}} ( y- \varphi)\} d_mv \, \mathrm{d} x \\
		=& \int_{\mathcal{X}} \operatorname{div}(V)\left[\frac{1}{2}(y-\bar{y})^{2}+\chi(y,v) - fv\right] \\
		& \phantom{\int_{\mathcal{X}}}\,\, + (y - \bar{y})d_my -(y - \bar{y})d_m\bar{y}+ vd_mf + f d_mv  \\
		&\phantom{\int_{\mathcal{X}}}\,\, - \chi(d_my,v) + \chi(y,d_mv)  \\
		&\phantom{\int_{\mathcal{X}}}\,\, -\sum_{i,j}a_{i,j}\left( -\partial^G_j v \sum_{l}\partial_l y\, \partial^G_i V_l- \partial^G_i y \sum_{l}\partial^G_lv\,\partial^G_jV_l\right) \\
		&\phantom{\int_{\mathcal{X}}}\,\, -\sum_i^nd_i(\partial^G_iyd_mv + d_my \,\partial^G_i v)  \\
		&\phantom{\int_{\mathcal{X}}}\,\, + d_m(\max\{0, \lambda + {\mathcal{C}} ( y- \varphi)\})v + \max\{0, \lambda + {\mathcal{C}} ( y- \varphi)\} d_mv \, \mathrm{d} x 
		\end{align*}
		as well as
		\begin{align*}
		&\lim_{t\searrow0}\frac{\mathscr{L}(\shape,y,v)-\mathscr{L}\left(\shape_t,y,v\right)}{t} \\
		&=
		\lim_{t\searrow0}
		\dfrac{\frac{1}{2} \int_{\mathcal{X}}(y-\bar{y})^{2} \, \mathrm{d} x - \frac{1}{2} \int_{\mathcal{X}_t} (y_t-\bar{y_t})^{2} \, \mathrm{d} x} {t}
		- \dfrac{\int_{\mathcal{X}} \chi(y,v) \, \mathrm{d} x - \int_{\mathcal{X}_t} \chi(y_t,v_t) \, \mathrm{d} x}{t} \\
		&\phantom{= \lim\left|\right.}\,\,\, - \dfrac{\int_{\mathcal{X}} fv \, \mathrm{d} x - \int_{\mathcal{X}_t} f_tv_t \, \mathrm{d} x}{t}
		\\
		&\phantom{= \lim\left|\right.}\,\,\, + \dfrac{\int_{\mathcal{X}} \max (0, \lambda + \mathcal{C} ( y - \varphi))v \, \mathrm{d} x - \int_{\mathcal{X}_t} \max (0, \lambda_t + c_t( y_t - \varphi))v_t \, \mathrm{d} x}{t} .
		\end{align*}
		Combining the Gâteaux semiderivative of the maximum function with the equality (\ref{equality_max}) and the assumption  that $\partial A$ is a measure zero set, gives 
		\begin{align*}
		\int_{\mathcal{X}} d_m(\max\{0, \lambda + {\mathcal{C}} ( y- \varphi)\})v \, \mathrm{d} x= \int_{\mathcal{X}} \mathbbm{1}_A (d_m\lambda+{\mathcal{C}}  (d_my - d_m\varphi))v \, \mathrm{d} x.
		\end{align*}
		
		In addition, we know that
		\begin{align*}
		d_m\bar{y} =\nabla  \bar{y}^\top V\quad \text{  and }\quad d_mf =\nabla f^\top V
		\end{align*}
		if we assume $\bar{y}$ and $f$  independent of the shape.
		Thus, thanks to the state equation~(\ref{PDElambda}) and the Gâteaux adjoint (\ref{eq:NewtonAdjoint}) we get
		\begin{align*}
		G(\shape,y,v)[V]
		=& \int_{\mathcal{X}} \operatorname{div}(V)\left[\frac{1}{2}(y-\bar{y})^{2}+\nabla y^{\top} \nabla v - fv\right] \\
		& \phantom{\int_{\mathcal{X}}}\,\,  - (y - \bar{y})\nabla \bar{y}^\top V +v\nabla  f^\top V\\
		& \phantom{\int_{\mathcal{X}}}\,\, - \sum_{i,j}a_{i,j}\left( -\partial^G_j v\sum_l \partial^G_l y \,\partial^G_i V_l - \partial^G_i y \sum_{l}\partial^G_lv\,\partial^G_jV_l\right) \\
		& \phantom{\int_{\mathcal{X}}}\,\, - \sum_i^n d_i\left( -v\sum_{l}\partial^G_ly\partial^G_iV_j - y\sum_{l}\partial^G_lv\partial^G_iV_j \right) \mathrm{d} x \\
		&+\int_A (y - \bar{y})d_my\, \mathrm{d} x. 
		\end{align*}
		In the active set $A$, we have $y=\varphi$. 
		
		Moreover, $d_m\varphi =\nabla \varphi^\top V$ yields. 
		Thus, the integral over the active set is given by
		\begin{align*}
		\int_A (y - \bar{y})d_my\, \mathrm{d} x=\int_A (\varphi - \bar{y})\nabla \varphi^\top V \, \mathrm{d} x.
		\end{align*}
		Combining this with lemma~\ref{le:D_N(int p_t)} we see
		\begin{align*}
		\lim_{t\searrow0}&\frac{\mathscr{L}(\mathcal{X})-\mathscr{L}\left(\mathcal{X}_{t}\right)}{t} = G(\shape,y,v)[V].
		\end{align*}
		Therefore, $d^G\mathscr{L}$  is given by $G$.
	\end{proof}
	
	\begin{remark}
		It is worth mentioning that the Gâteaux adjoint equation~(\ref{eq:NewtonAdjoint}) and the Shape derivative given in lemma~\ref{le:NewtonShapeDerivativeOfL} are the limit object in \cite[Theorem 3.3]{Luft2020} and \cite[Theorem 3.5]{Luft2020}, respectively. Consequently, 
		if we consider the special case 
		$
		a(y,v): = \int_{\mathcal{X}} \nabla y^\top \nabla v \, \mathrm{d} x
		$
		as in \cite[section 4]{Luft2020},
		the Lagrangian is given by 
		$
		\Lag(y, \shape,v) =  \int_{\mathcal{X}}\frac{1}{2}(y-\bar{y})^{2}-\nabla y^\top \nabla v -fv+ \max \{0, \lambda+\mathcal{C} (y-\varphi)\}v \, \mathrm{d} x.
		$
		Then, lemma~\ref{le:NewtonShapeDerivativeOfL} yields the Gâteaux  semiderivative
		\begin{align*}
		d^G\mathscr{L}&(\shape,y,v)[V]\\
		= \int_{\mathcal{X}}&-(y-\bar{y})\nabla \bar{y}^\top V-\nabla y^\top (\nabla V^\top+\nabla V)\nabla p\\ &+\operatorname{div}(V)\left[\frac{1}{2}(y-\bar{y})^{2}+\nabla y^{\top} \nabla v - fv\right] \mathrm{d} x+\int_A(\varphi-\bar{y})\nabla\varphi^\top V\mathrm{d}  x ,
		\end{align*}
		which confirms the limit object given in \cite[equality (43)]{Luft2020}.
	\end{remark}

	\paragraph{Gâteaux optimality system}

	Here, we summarize the  optimality conditions. 
	For a solution shape $\shape$ to problem (\ref{eq_minimization})--(\ref{VI_general}), there holds the Gâteaux adjoint variational equation (\ref{eq:NewtonAdjoint}). 
	Since the Gâteaux  semiderivative $d^G \mathscr{L}(\shape,y,v)[V]$  given in lemma~\ref{le:NewtonShapeDerivativeOfL} is continuous in $V$, we obtain from (\ref{Nshape_nec}) the following necessary condition for the optimal shape $\shape$:
	\begin{align*}
	0=d^G \mathscr{L}(\shape,y,v)[V]\qquad \forall\, V\in H^1(\shape,\mathbb{R}^n)
	\end{align*}
	The Gâteaux adjoint equation, this necessary condition and the state equation (\ref{VI_general}) define together a set of equations, which is used for the computation of the solution in \cite{Luft2020}, where a perturbation approach is used for construction of $d^G \mathscr{L}(\shape,y,v)[V]$. 
	We observe also that $d^G \mathscr{L}(\shape,y,v)[V]$ is an integral on $\mathcal{X}$, where the integrand is  Gâteaux semidifferentiable with respect to $\shape$ and which lacks standard differentiability only at the the boundary of the active set $A$, which is a set of Lebesgue measure zero. 
	Thus, $d^G \mathscr{L}(\shape,y,v)[V]$ is a {Gâteaux} shape {semi}derivative and can therefore be used, in order to define a descent direction by employing an appropriate scalar product.
	In \cite{Luft2020}, the same expression has been derived in a perturbation approach, which necessitates a safeguard technique. 
	To be more precise,  there are used limit objects of the regularized problem in \cite{Luft2020} but  it is not guaranteed that the limit object of the regularized shape derivative yields a gradient in the classical sense in such a way that it is a descent direction.
	Therefore,  a safeguard technique that checks if the limit object provides a descent direction is necessary in the algorithm based on the approach in \cite{Luft2020}.

	%%%%%%%%%%%%%%%%%%%%%%%%%%%%%%%%%%%%%%%%%%%%%%%%%%%%%%%%%%%%%%%%%%%%%%%%%%%%%%%%%%%%%%%%%%%%%%%%%%%%%%%%%%%%%%%%%%%%%%%%%%%%%%%%%%%%%%%%%%%%%%%%%%%%%%%%%%%%%%%%%%%%%%%%%%%%%%%%%%%%%%%%%%%%%%%%%%%%%%%%%%%%%%%%%%%%%%%%%%%%%%%%%%%%%%%%%%%%%%%%%%%%%%%%%%%%%%%%%%%%%%%%%%%%%%%%%%%%%%%%%%%%%%%%%%%%%%%%%%%%%%%%%%%%%%%%%%%%%%%%%%%%%%%%%%%%%%%%%%%%%%%%%%%%%%%%%%%%%%%%%%%%%%%%%%%%%%%%%%%%%%%%%%%%%%%%%%%%%%%%%%%%%%%%%%%%%%%%%%%%%%%%%%%%%%%%%%%%%%%%%%%%%%%%%%%%%%%%%%%%%%%%%%%%%%%%%%%%%%%%%%%%%%%%%%%%%%%%%%%%%%%%%%%%%%%%%%%%%%%%%%%%%%%%%%%%%%%%%%%%%%%%%%%%%%%%%%%%%%%%%%%%%%%%%%%%%%%%%%%%%%%%%%%%%%%%%%%%%%%%
	
	\section{Conclusion}
	\label{sec:conclusion}

		In this paper, the concept of the Gâteaux semiderivative is used to generalize some objects and methods for shape calculus.
		One of the major advantages of the Gâteaux approach is that one no longer needs to regularize the variational inequality constraint in optimization problems. Moreover,  a limit process as in \cite{Luft2020} can be avoided.
		Considering Gâteaux semiderivatives results in an approach for the contact problem. %, that deals with a reformulated problem that is as close as possible to the original one, since no kind of regularization is needed.
		In addition, this paper explains the limiting expression for the shape derivative given in \cite{Luft2020} now as an expression derived from a Gâteaux adjoint. %\red{ and, thus, a safeguard technique in the corresponding algorithm presented in in \cite{Luft2020} is no longer necessary. }

	\section*{Acknowledgments}
	This work has been partly supported by the German Research Foundation (DFG) within the priority program SPP1962/2 under contract numbers WE~6629/1-1 and SCHU~804/19-1. 

	\bibliographystyle{siamplain}
	\bibliography{references}
\end{document}